\newcommand{\expn}{\operatorname{e}}
\newcommand{\beq}{\begin{equation}}
\newcommand{\eeq}{\end{equation}}
\newcommand {\mat}      [1] {\left[\begin{array}{#1}}
\newcommand {\rix}          {\end{array}\right]}
\newcommand {\smat}      [1] {\left[\begin{smallmatrix}{#1}}
\newcommand {\srix}          {\end{smallmatrix}\right]}
\newcommand {\s}      [1] {\begin{smallmatrix}{#1}}
\newcommand {\se}          {\end{smallmatrix}}
\newtheorem{remark}{Remark}
\begin{document}

\title{An $L^2_T$-error bound for time-limited balanced truncation}

\author{Martin Redmann\thanks{Weierstrass Institute for Applied Analysis and Stochastics, Mohrenstrasse 39, 10117 Berlin Germany  (Email: {\tt 
martin.redmann@wias-berlin.de})}. The author~gratefully acknowledge the support from the DFG through the research unit FOR2402.}

\maketitle

\begin{abstract}
Model order reduction (MOR) is often applied to spatially-discretized partial differential equations to reduce their order and hence decrease computational complexity. A reduced system can be obtained, e.g., by time-limited 
balanced truncation, a method that aims to construct an accurate reduced order model on a given finite time interval $[0, T]$. This particular balancing related MOR technique is studied in this paper. An $L^2_T$-error bound based 
on the truncated time-limited singular values is proved and is the main result of this paper.
The derived error bound converges (as $T\rightarrow \infty$) to the well-known $\mathcal H_\infty$-error bound of unrestricted balanced truncation, a scheme that is used to construct a good reduced system on the entire time line. 
The techniques within the proofs of this paper can also be applied to unrestricted balanced truncation so that a relatively short time domain proof of the $\mathcal H_\infty$-error bound is found here.
\end{abstract}

\begin{keywords}
 model order reduction, linear time-invariant systems, time-limited balanced truncation, error bound
\end{keywords}

\begin{AMS}
93A15, 93B99, 93C05, 93C15. 
\end{AMS}

\setcounter{page}{1}

  \setlength{\parindent}{0pt}

%%%%%%%%%%%%%%%%%%%%%%

\section{Introduction}\label{secintro}

Many phenomena in real life can be described by partial differential equations. Famous examples are the 
motion of viscous fluids, the description of water or sound waves and the distribution of heat. In order to solve these equations numerically it is required to discretize in time and space. 
Discretizing in space usually leads to large scale systems of ordinary differential equation which usually cause large computational effort. To overcome this burden, model order reduction (MOR) can be used to 
replace a high dimensional system by one of smaller order aiming to capture the main information of the original system.\smallskip

In this paper, we consider the following linear, time-invariant system:
\begin{align}\label{controlsystemoriginal}
\dot x_o(t)=A_o x_o(t)+B_o u(t),\quad x_o(0)=0,\quad y(t)=C_o x_o(t),
\end{align}
where $A_o\in\mathbb R^{n\times n}$ is assumed to be Hurwitz implying asymptotic stability of the state equation in (\ref{controlsystemoriginal}). $B_o\in\mathbb R^{n\times m}$ is the input and 
$C_o\in\mathbb R^{p\times n}$ the output matrix. Moreover, let the control $u$ be square integrable with respect to time, i.e., $\left\|u\right\|^2_{L^2_T}:=\int_0^T \left\|u(s)\right\|^2_2 ds<\infty$ for $T<\infty$.
We study a MOR technique that is called time-limited balanced truncation (BT). It was introduced in \cite{morGawJ90} with the goal of constructing an accurate reduced order model (ROM) on a
finite time interval $[0, T]$. The idea of balancing MOR schemes is to simultaneously diagonalize so-called Gramians in order to create a system in which the dominant reachable and observable states are 
the same. Then the states that only contribute very little to the system dynamics are truncated to obtain a ROM. For time-limited BT time-limited reachability and observability Gramians are aimed to be diagonalized. 
These are defined as follows
\begin{align}\label{TLBT_Gram}
	P_{T}:=\int_0^{T} \expn^{A_o s}B_oB_o^\top \expn^{A_o^\top s} ds,\quad  Q_{T}:=\int_0^{T} \expn^{A_o^\top s}C_o^\top C_o \expn^{A_os} ds
\end{align}
and it can be shown that they are the unique solutions to \begin{subequations}\label{TLBT_Lyap}
 \begin{align}\label{TLBT_LyapP}
A_o P_{T}+P_{T}A_o^\top+B_oB_o^\top-\expn^{A_oT}B_oB_o^\top\expn^{A_o^\top T}&=0,\\\label{TLBT_LyapQ}
A_o^\top Q_{T}+Q_{T}A_o+C_o^\top C_o-\expn^{A_o^\top T} C_o^\top C_o\expn^{A_o T}&=0.
\end{align}
\end{subequations}
Throughout this paper let us assume that system (\ref{controlsystemoriginal}) is completely reachable and observable which is equivalent to $P_T$ and $Q_T$ being positive definite, see \cite{antoulas}. In order to diagonalize 
$P_T$ and $Q_T$ a state space transformation is used. This basically means that the original matrices $(A_o, B_o, C_o)$ %, \expn^{A_oT}B_o, C_o\expn^{A_o T})$
are replaced by $(A, B, C):= (S A_o S^{-1}, S B_o, C_o S^{-1})$, where $S$ is an invertible matrix. This transformation does not change the quantity of interest $y$ but it can be chosen such that the
Gramians of the transformed system are equal and diagonal, i.e.,  $S P_{T} S^\top=S^{-\top}Q_{T}S^{-1}=\Sigma_{T}=\diag(\sigma_{T, 1}, \ldots, \sigma_{T, n})$ with $\sigma_{T, 1}\geq \ldots \geq \sigma_{T, n}>0$. These 
diagonal entries are called time-limited singular values and are given as the square root of the eigenvalues of $P_T Q_T$. The balancing transformation can be derived through the Cholesky factorizations $P_{T}=L_PL_P^\top$, 
$Q_{T}=L_QL_Q^\top$, and the singular value decomposition $X\Sigma_{T}Y^\top=L_Q^\top L_P$. The matrix $S$ and its inverse are then given by
$S=\Sigma_{T}^{-\tfrac{1}{2}}X^\top L^\top_Q$ and $S^{-1}=L_PY\Sigma_{T}^{-\tfrac{1}{2}}$, see, e.g.,~\cite{antoulas}. Now, the ROM with state space dimension $r$ is obtained by selecting the left upper $r\times r$ block of $A$ 
and choosing the the first $r$ rows of $B$ as the input matrix as well as the first $r$ columns of $C$ as the output matrix.\smallskip

Unrestricted BT is a method that has already been widely studied \cite{antoulas, moore}. It relies on the infinite Gramians which are obtained by taking the limit $T\rightarrow \infty$ in (\ref{TLBT_Gram}). In \cite{BTstab}, 
the preservation of asymptotic stability in the ROM has been shown and in \cite{ennsbound,morGlo84} an $\mathcal H_\infty$-error bound was proved, moreover \cite{antoulas} contains an $\mathcal H_2$-error bound for unrestricted BT. 
\smallskip

Asymptotic stability is not preserved in the ROM for the time-limited case. However, error bounds exist such as $\mathcal{H}_2$-type error bounds that are quite recent. They can be found in \cite{kueduff, redmannkuerschner}.
An $\mathcal H_{\infty}$-error bound does not exist for the method considered here. However, there is one for a modified version of time-limited BT \cite{serkanantoulas}.
Time-limited BT for unstable systems is furthermore discussed in \cite{morKue18}. The main result of this paper is an $L^2_T$-error bound for time-limited BT that leads to the $\mathcal H_\infty$-bound 
in \cite{ennsbound,morGlo84} for $T\rightarrow \infty$. As a side effect a relatively short time domain 
proof of the bound in \cite{ennsbound,morGlo84} is presented which can be seen as a special case of the time-limited scenario. We conclude the paper by a numerical experiment in which the new error bound is tested.

\section{Reduced system and error bound for time-limited BT}\label{errorboundsBT}

In this section, we work with the balanced realization $(A, B, C)$ of (\ref{controlsystemoriginal}) introduced in Section \ref{secintro} through the balancing transformation $S$.
Thus, (\ref{TLBT_LyapP}) and (\ref{TLBT_LyapQ}) become
\begin{align}\label{balancedreach}
 A \Sigma_T+\Sigma_T A^\top&= -BB^\top +F_{T}F_{T}^\top,\\ \label{balancedobserve}
 A^\top \Sigma_T+\Sigma_T A &= -C^\top C + G_{T}^\top G_{T},
                                       \end{align}
i.e., $S P_T S^\top=S^{-\top} Q_T S^{-1}=\Sigma_T=\diag(\sigma_{T, 1}, \ldots, \sigma_{T, n})>0$, where $G_T:=C_o \expn^{A_oT}S^{-1}$ and $F_T:=S \expn^{A_oT}B_o$.
We partition the balanced coefficients of (\ref{controlsystemoriginal}) as follows:
\begin{align}\label{partmatrices}
 A=\smat{A}_{11}&{A}_{12}\\ 
{A}_{21}&{A}_{22}\srix,\;B=\smat B_1 \\ B_2\srix,\;C= \smat C_1 & C_2\srix,
            \end{align}
where $A_{11}\in \mathbb R^{r\times r}$, $B_1\in \mathbb R^{r\times m}$ and $C_1\in \mathbb R^{p\times r}$ etc.
Furthermore, we partition the state variable $x$ of the balanced realization and the time-limited Gramian \begin{align}\label{partgramstate}
               x(t)=\smat x_1(t) \\ x_2(t)\srix\text{ and }\Sigma_T=\smat \Sigma_{T, 1}& \\ & \Sigma_{T, 2}\srix,
                         \end{align}
where $x_1$ takes values in $\mathbb R^r$ ($x_2$ accordingly), $\Sigma_{T, 1}$ contains the large time-limited singular values and $\Sigma_{T, 2}$ the small ones. The ROM by time-limited BT then is 
\begin{subequations}\label{romstochstatebt}
\begin{align}\label{romstateeq}
             \dot x_r(t)&=A_{11}x_r(t)+B_1u(t),\\ 
    y_r(t)&=C_1x_r(t),
            \end{align}
            \end{subequations}
where $x_r(0)=0$. In the following, an $L^2_T$-error bound is proved. To do so, we define the variables \begin{align}\label{partxminusplus}
               x_-(t)=\smat x_1(t)-x_r(t) \\ x_2(t)\srix\text{ and } x_+(t)=\smat x_1(t)+x_r(t) \\ x_2(t)\srix.          
                           \end{align}
The system for $x_-$ is given by  
\begin{subequations}\label{xminus}
\begin{align}\label{statexminus}
             \dot x_-(t)&=Ax_-(t) + \smat 0 \\ h(t)\srix,\\ \label{xminusoutput}
    y_-(t)&=Cx_-(t)=Cx(t)-C_1 x_r(t)=y(t)-y_r(t),
            \end{align}
            \end{subequations}
where $h(t):=A_{21}x_r(t)+B_2 u(t)$. We derive (\ref{xminus}) by comparing the balanced system  (\ref{controlsystemoriginal}) 
with the reduced system (\ref{romstochstatebt}) using the partitions in (\ref{partmatrices}) and (\ref{partgramstate}).
The equation for $x_+$ is obtained in a similar manner. In comparison to (\ref{statexminus}), the sign for the compensation term $h$ is different and an additional control term appears:
\begin{align}\label{xplus}
             \dot x_+(t)=Ax_+(t)+2 B u(t) - \smat 0 \\ h(t)\srix.
            \end{align}
The proof of the error bound is simply based on applying the product rule in order to find suitable representations for $x_-^\top(t) \Sigma_T x_-(t)$ and $x_+^\top(t) \Sigma_T^{-1} x_+(t)$.
 These representations are then used to compute the desired bound. Deriving error bounds through the variables $x_-$ and $x_+$ has been done before in \cite{redstochbil,redmannstochbilspa}. 
 We start with a special case before we focus on the general one.
  \begin{lemma}\label{mainthm}
Let $\Sigma_{T, 2}=\sigma_T I$, $y$ be the output of the full model (\ref{controlsystemoriginal}) and $y_r$ be the output of the ROM (\ref{romstochstatebt}). Then, for $T>0$, 
we have \begin{align*}
 \left\|y-y_r\right\|_{L^2_{T}}\leq 2 \sigma_T  c_T \left\|u\right\|_{L^2_T},       
  \end{align*}
where $c_T=\expn^{0.5 \max\{\|G_T\Sigma_T^{-\frac{1}{2}}\|_2^2, \|F^\top_T\Sigma_T^{-\frac{1}{2}}\|_2^2\}T}$.
\end{lemma}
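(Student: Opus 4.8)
The plan is to follow the strategy outlined right before the lemma: apply the product rule to $x_-^\top(t)\Sigma_T x_-(t)$ and to $x_+^\top(t)\Sigma_T^{-1} x_+(t)$, using the balanced Lyapunov-type equations \gref{balancedreach} and \gref{balancedobserve} together with the dynamics \gref{xminus} and \gref{xplus}. First I would compute $\tfrac{d}{dt}\bigl(x_-^\top \Sigma_T x_-\bigr)$. Differentiating and inserting \gref{statexminus} yields $\dot x_-^\top \Sigma_T x_- + x_-^\top \Sigma_T \dot x_- = x_-^\top(A^\top\Sigma_T + \Sigma_T A)x_- + 2\, x_-^\top\Sigma_T \smat 0\\ h\srix$. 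Now I substitute \gref{balancedobserve} for the symmetric part, so the quadratic term becomes $-x_-^\top C^\top C x_- + x_-^\top G_T^\top G_T x_- = -\|y_-\|_2^2 + \|G_T x_-\|_2^2$. The key observation for the last cross term is that $\Sigma_T \smat 0\\ h\srix = \smat 0 \\ \Sigma_{T,2} h\srix = \smat 0\\ \sigma_T h\srix$ because $\Sigma_{T,2}=\sigma_T I$, and its inner product with $x_-$ picks out only the $x_2$-block; i.e. $2\, x_-^\top \Sigma_T\smat 0\\ h\srix = 2\sigma_T\, x_2^\top h$. Integrating from $0$ to some $t\le T$, and using $x_-(0)=0$, gives an identity expressing $\int_0^t \|y_-\|_2^2\,ds$ in terms of $\int_0^t \|G_T x_-\|_2^2\,ds$, $2\sigma_T\int_0^t x_2^\top h\,ds$, and the boundary term $x_-^\top(t)\Sigma_T x_-(t)\ge 0$.

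Next I would do the analogous computation for $x_+^\top \Sigma_T^{-1} x_+$, using \gref{xplus} and \gref{balancedreach}. Here $\tfrac{d}{dt}\bigl(x_+^\top\Sigma_T^{-1}x_+\bigr) = x_+^\top\Sigma_T^{-1}(A\Sigma_T + \Sigma_T A^\top)\Sigma_T^{-1}x_+ + 2\,x_+^\top\Sigma_T^{-1}\bigl(2Bu - \smat 0\\ h\srix\bigr)$; wait — one must be careful since $\Sigma_T^{-1}A\Sigma_T$ is not symmetric, but $x_+^\top\Sigma_T^{-1}(A\Sigma_T+\Sigma_TA^\top)\Sigma_T^{-1}x_+$ is exactly the symmetrization, and \gref{balancedreach} rewrites it as $-\|B^\top\Sigma_T^{-1}x_+\|_2^2 + \|F_T^\top\Sigma_T^{-1}x_+\|_2^2$. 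Again $\Sigma_T^{-1}\smat 0\\ h\srix = \smat 0\\ \sigma_T^{-1}h\srix$, so $2\,x_+^\top\Sigma_T^{-1}\smat 0\\ h\srix = 2\sigma_T^{-1} x_2^\top h$. Scaling this whole identity by $\sigma_T^2$ and comparing with the $x_-$ identity, the two awkward $\pm 2\sigma_T x_2^\top h$ terms cancel when we add them. The plan is then to add the $x_-$ identity to $\sigma_T^2$ times the $x_+$ identity; after the cancellation we are left with
\begin{align*}
\int_0^t \|y_-\|_2^2\,ds &= -x_-^\top(t)\Sigma_T x_-(t) - \sigma_T^2 x_+^\top(t)\Sigma_T^{-1}x_+(t) + \int_0^t \|G_T x_-\|_2^2\,ds\\
&\quad + \sigma_T^2\int_0^t\|F_T^\top\Sigma_T^{-1}x_+\|_2^2\,ds - \sigma_T^2\int_0^t\|B^\top\Sigma_T^{-1}x_+\|_2^2\,ds + 4\sigma_T^2\int_0^t u^\top B^\top\Sigma_T^{-1}x_+\,ds.
\end{align*}
Dropping the two nonpositive boundary terms and the nonpositive $-\sigma_T^2\|B^\top\Sigma_T^{-1}x_+\|^2$ term, and using $4\sigma_T^2 u^\top B^\top\Sigma_T^{-1}x_+ \le \sigma_T^2\|B^\top\Sigma_T^{-1}x_+\|_2^2 + 4\sigma_T^2\|u\|_2^2$ — no, I'd rather complete the square differently; actually combining $-\sigma_T^2\|B^\top\Sigma_T^{-1}x_+\|_2^2 + 4\sigma_T^2 u^\top B^\top\Sigma_T^{-1}x_+ = -\sigma_T^2\|B^\top\Sigma_T^{-1}x_+ - 2u\|_2^2 + 4\sigma_T^2\|u\|_2^2 \le 4\sigma_T^2\|u\|_2^2$. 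So with $t=T$:
\begin{align*}
\|y-y_r\|_{L^2_T}^2 \le 4\sigma_T^2\|u\|_{L^2_T}^2 + \int_0^T\|G_T x_-\|_2^2\,ds + \sigma_T^2\int_0^T\|F_T^\top\Sigma_T^{-1}x_+\|_2^2\,ds.
\end{align*}

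The remaining and, I expect, the main obstacle is controlling the two extra integrals $\int_0^T\|G_T x_-\|_2^2\,ds$ and $\sigma_T^2\int_0^T\|F_T^\top\Sigma_T^{-1}x_+\|_2^2\,ds$, which is exactly where the constant $c_T$ (and hence the exponential factor) comes from; in the limit $T\to\infty$ both $G_T$ and $F_T$ vanish (since $A_o$ is Hurwitz and $\Sigma_T\to\Sigma$), recovering the classical bound $\|y-y_r\|\le 2\sigma\|u\|$. To handle them I would go back to the energy identities before dropping the boundary terms, keeping $x_-^\top(t)\Sigma_T x_-(t)$ and $\sigma_T^2 x_+^\top(t)\Sigma_T^{-1}x_+(t)$ on the left, and read them as differential inequalities of Grönwall type. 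Concretely, writing $V(t) := x_-^\top(t)\Sigma_T x_-(t) + \sigma_T^2 x_+^\top(t)\Sigma_T^{-1}x_+(t)$, the identities give $\dot V(t) \le \|G_T x_-\|_2^2 + \sigma_T^2\|F_T^\top\Sigma_T^{-1}x_+\|_2^2 + 4\sigma_T^2\|u(t)\|_2^2 - \|y_-(t)\|_2^2$. Now estimate $\|G_T x_-\|_2^2 = \|G_T\Sigma_T^{-1/2}\Sigma_T^{1/2}x_-\|_2^2 \le \|G_T\Sigma_T^{-1/2}\|_2^2\, x_-^\top\Sigma_T x_-$ and similarly $\sigma_T^2\|F_T^\top\Sigma_T^{-1}x_+\|_2^2 \le \|F_T^\top\Sigma_T^{-1/2}\|_2^2\,\sigma_T^2 x_+^\top\Sigma_T^{-1}x_+$, so both are bounded by $\mu\, V(t)$ with $\mu := \max\{\|G_T\Sigma_T^{-1/2}\|_2^2,\|F_T^\top\Sigma_T^{-1/2}\|_2^2\}$. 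Hence $\dot V(t) \le 2\mu V(t) + 4\sigma_T^2\|u(t)\|_2^2 - \|y_-(t)\|_2^2$. Grönwall's lemma (with $V(0)=0$) then yields $V(t) \le \int_0^t \expn^{2\mu(t-s)}\bigl(4\sigma_T^2\|u(s)\|_2^2 - \|y_-(s)\|_2^2\bigr)\,ds$. Feeding this back: $\int_0^T\|G_Tx_-\|^2 + \sigma_T^2\|F_T^\top\Sigma_T^{-1}x_+\|^2\,ds \le 2\mu\int_0^T V(s)\,ds$, and bounding $\expn^{2\mu(t-s)}\le \expn^{2\mu T}$ on $[0,T]$ turns the earlier inequality into $\|y-y_r\|_{L^2_T}^2 \le 4\sigma_T^2 \expn^{2\mu T}\|u\|_{L^2_T}^2$ after absorbing the $-\|y_-\|^2$ term (which only helps). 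Taking square roots gives $\|y-y_r\|_{L^2_T}\le 2\sigma_T\expn^{\mu T}\|u\|_{L^2_T} = 2\sigma_T c_T\|u\|_{L^2_T}$, since $c_T = \expn^{0.5\cdot 2\mu\cdot T/... }$ — matching $c_T = \expn^{0.5\max\{\dots\}T}$ once the bookkeeping of the Grönwall exponent is done carefully (the factor $0.5$ versus my $\mu$ suggests the cleanest route treats the two energies separately rather than summing, or exploits the cancellation more tightly). Reconciling that constant precisely — i.e. getting the exponent down to $0.5\mu T$ rather than $\mu T$ — is the delicate point, and I would expect it to require splitting $V$ back into its two pieces and applying Grönwall to each with its own rate $\|G_T\Sigma_T^{-1/2}\|_2^2$ or $\|F_T^\top\Sigma_T^{-1/2}\|_2^2$, then taking the max at the end.
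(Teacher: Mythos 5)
Your strategy is essentially the paper's argument reorganized: the paper also works with the two energies $x_-^\top\Sigma_T x_-$ and $x_+^\top\Sigma_T^{-1}x_+$, inserts \gref{balancedobserve} and \gref{balancedreach}, uses $\Sigma_{T,2}=\sigma_T I$ to trade the $h$-cross term of the $x_-$ identity for $\sigma_T^2$ times the one of the $x_+$ identity (you realize the same mechanism by adding the two identities so that these terms cancel), treats the control term by a Schur-complement step equivalent to your completion of the square, and then invokes Gronwall --- the only structural difference being that the paper applies Gronwall separately to each energy while you apply it once to the sum $V$. That route does work, but as written it does not reach the stated constant, and this is a genuine (though fixable) gap rather than mere bookkeeping left to the reader.

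Concretely, from $\|G_T x_-\|_2^2\le \mu\, x_-^\top\Sigma_T x_-$ and $\sigma_T^2\|F_T^\top\Sigma_T^{-1}x_+\|_2^2\le \mu\,\sigma_T^2 x_+^\top\Sigma_T^{-1}x_+$ with $\mu:=\max\{\|G_T\Sigma_T^{-1/2}\|_2^2,\|F_T^\top\Sigma_T^{-1/2}\|_2^2\}$, the sum of the two terms is bounded by $\mu V$, not $2\mu V$: each term is controlled by $\mu$ times its \emph{own} summand of $V$. Using $2\mu$ doubles the Gronwall rate and, as you yourself noticed, only yields $\expn^{\mu T}$ in place of the claimed $c_T=\expn^{0.5\mu T}$; no splitting of $V$ is needed to repair this --- just use the rate $\mu$. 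A second loose spot is the final bookkeeping: bounding the Gronwall kernel by its value at $t-s=T$ does not give $4\sigma_T^2\expn^{\mu T}\|u\|_{L^2_T}^2$ (it leaves an extra factor of order $\mu T$); instead integrate the kernel exactly, as the paper does in \gref{keineahnung2}. With $V(0)=0$, Gronwall gives $V(s)\le 4\sigma_T^2\int_0^s\expn^{\mu(s-w)}\|u(w)\|_2^2\,dw$ after dropping the nonpositive $-\|y_-\|^2$ contribution, hence $\mu\int_0^T V(s)\,ds\le 4\sigma_T^2\int_0^T\|u(w)\|_2^2\bigl(\expn^{\mu(T-w)}-1\bigr)dw\le 4\sigma_T^2\|u\|_{L^2_T}^2\bigl(\expn^{\mu T}-1\bigr)$; added to the $4\sigma_T^2\|u\|_{L^2_T}^2$ from the completed square this gives $\|y-y_r\|_{L^2_T}^2\le 4\sigma_T^2\expn^{\mu T}\|u\|_{L^2_T}^2$, i.e.\ exactly the bound $2\sigma_T c_T\|u\|_{L^2_T}$ of the lemma. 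With these two corrections your proof closes and matches the paper's constant.
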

\begin{proof}
We observe that $x_-(0)=0$ due to the zero initial conditions of $x$ and $x_r$. 
Combining this fact with the product rule, we determine an estimate for  $x_-^\top(t) \Sigma_T x_-(t)$. Hence, inserting (\ref{statexminus}), we find
\begin{align*}
x_-^\top(t)\Sigma_T x_-(t)&=2 \int_0^t x_-^\top(s) \Sigma_T \dot x_-(s) ds\\ &=2 \int_0^t x_-^\top(s)\Sigma_T\left(Ax_-(s) + \smat 0 \\ h(s)\srix \right) ds\\
&=\int_0^t x_-^\top(s)(A^\top\Sigma_T+\Sigma_T A )x_-(s)ds + c_-(t)
\end{align*}
for $t\in[0, T]$, where $c_-(t):=2 \int_0^t x_-^\top(s)\Sigma_T \smat 0 \\ h(s)\srix ds=2 \int_0^t x_2^\top(s) \Sigma_{T, 2} h(s) ds$. 
The identity for $c_-$ is obtained by using the partitions of $\Sigma_T$ and $x_-$ in (\ref{partgramstate}) and (\ref{partxminusplus}), respectively.
We insert (\ref{balancedobserve}) into the above equation for $x_-^\top(t)\Sigma_T x_-(t)$ and take (\ref{xminusoutput}) into account. This leads to 
\begin{align*}
x_-^\top(t)\Sigma_T x_-(t)&= \int_0^t x_-^\top(s)(G_{T}^\top G_{T}-C^\top C)x_-(s)ds+c_-(t)\\
&=c_-(t)-\left\|y-y_r\right\|^2_{L^2_{t}}+\int_0^t x_-^\top(s) G_{T}^\top G_{T} x_-(s) ds.
\end{align*}
Since $x_-^\top(s) G_{T}^\top G_{T} x_-(s)=\|G_T\Sigma_T^{-\frac{1}{2}} \Sigma_T^{\frac{1}{2}}x_-(s)\|_2^2\leq \|G_T\Sigma_T^{-\frac{1}{2}}\|_2^2\; x_-^\top(s)\Sigma_T x_-(s)\leq  k\; x_-^\top(s)\Sigma_T x_-(s)$,
where $k:=\max\{\|G_T\Sigma_T^{-\frac{1}{2}}\|_2^2, \|F^\top_T\Sigma_T^{-\frac{1}{2}}\|_2^2\}$, Lemma \ref{gronwall} gives
\begin{align*}
x_-^\top(t)\Sigma_T x_-(t)&\leq c_-(t)- \|y-y_r\|_{L^2_{t}}^2 +\int_0^t (c_-(s) - \left\|y-y_r\right\|_{L^2_{s}}^2) k \expn^{k (t-s)} ds\\
& \leq c_-(t)- \|y-y_r\|_{L^2_{t}}^2 +\int_0^t c_-(s)  k \expn^{k (t-s)} ds.
\end{align*}
This implies that\begin{align}\nonumber
\left\|y-y_r\right\|_{L^2_{t}}^2 &\leq c_-(t) +\int_0^t c_-(s)  k \expn^{k (t-s)} ds\\ \label{firstbound}
&=\sigma_T^2 \left(c_+(t) +\int_0^t c_+(s)  k \expn^{k (t-s)} ds\right),
\end{align}
where $c_+(t):=2 \int_0^t x_2^\top(s) \Sigma^{-1}_{T, 2} h(s) ds$, exploiting that $\Sigma_{T, 2} =\sigma_T I$. We derive an upper bound for the expression $x_+^\top(t) \Sigma^{-1} x_+(t)$ 
to further analyze (\ref{firstbound}). Through (\ref{xplus}), it holds that
\begin{align}\label{insetobserveequation2}
x_+^\top(t)\Sigma_T^{-1} x_+(t)&=2 \int_0^t x_+^\top(s) \Sigma_T^{-1} \dot x_+(s) ds\\\nonumber
&=2  \int_0^t x_+^\top(s) \Sigma_T^{-1} (Ax_+(s)+2 B u(s) - \smat 0 \\ h(s)\srix) ds\\\nonumber
&=  \int_0^t x_+^\top(s)(A^\top\Sigma_T^{-1} + \Sigma_T^{-1} A)x_+(s) ds\\\nonumber
&\quad +\int_0^t x_+^\top(s) \Sigma_T^{-1} (4 B u(s) - 2\smat 0 \\ h(s)\srix) ds.
\end{align}
We multiply (\ref{balancedreach}) with $\Sigma_T^{-1}$ from the left and from the right and obtain 
\begin{align*}
 A^\top\Sigma_T^{-1}+\Sigma_T^{-1} A-\Sigma_T^{-1} F_{T}F_{ T}^\top\Sigma_T^{-1}= -\Sigma_T^{-1}BB^\top\Sigma_T^{-1},\end{align*}
which, by the Schur complement condition on definiteness, implies \begin{align}\label{schurposdef}
 \mat{cc} A^\top \Sigma_T^{-1} +\Sigma_T^{-1} A-\Sigma_T^{-1} F_{T}F_{T}^\top\Sigma_T^{-1} & \Sigma_T^{-1}B\\
 B^\top \Sigma_T^{-1}& -I\rix\leq 0.
                                       \end{align}
We multiply (\ref{schurposdef}) with $\smat x_+ \\ 2u\srix^\top$ from the left and with $\smat x_+\\ 2u\srix$ from the right. This leads to 
\begin{align*}
4 \left\|u\right\|_{2}^2+x_+^\top\Sigma_T^{-1} F_{T}F_{T}^\top\Sigma_T^{-1}x_+ \geq  x_+^\top\left(A^\top \Sigma_T^{-1}+\Sigma_T^{-1} A\right)x_++4x_+^\top\Sigma_T^{-1} Bu.
\end{align*}
Applying this result to inequality (\ref{insetobserveequation2}) gives 
\begin{align*}
x_+^\top(t)\Sigma_T^{-1} x_+(t) \leq & 4 \left\|u\right\|_{L^2_t}^2+\int_0^t \left\|F_{T}^\top\Sigma_T^{-1}x_+(s)\right\|_{2}^2 ds -c_+(t),
\end{align*}
exploiting that $x_+^\top(s)\Sigma_T^{-1} \smat 0 \\ h(s)\srix=x_2^\top(s)\Sigma_{T, 2}^{-1} h(s)$ using the partitions of $x_+$ and $\Sigma_T$. 
Since $\left\|F_{T}^\top\Sigma_T^{-1}x_+(s)\right\|_{2}^2\leq \left\|F_{T}^\top\Sigma_T^{-\frac{1}{2}}\right\|_{2}^2 x_+^\top(s)\Sigma_T^{-1} x_+(s)\leq k\;x_+^\top(s)\Sigma_T^{-1} x_+(s)$, we obtain
\begin{align}\label{insetobserveequationbla}
x_+^\top(t)\Sigma_T^{-1} x_+(t) \leq & 4 \left\|u\right\|_{L^2_t}^2-c_+(t)+k \int_0^t x_+^\top(s)\Sigma_T^{-1} x_+(s) ds.
\end{align}
Applying the Lemma of Gronwall (Lemma \ref{gronwall}) to (\ref{insetobserveequationbla}) yields
\begin{align}\label{estimatebla}
x_+^\top(t)\Sigma_T^{-1} x_+(t) &\leq 4 \left\|u\right\|_{L^2_t}^2-c_+(t) +\int_0^t (4 \left\|u\right\|_{L^2_s}^2-c_+(s)) k \expn^{k (t-s)} ds.
\end{align}
We then have that \begin{align}
\int_0^t \left\|u\right\|_{L^2_s}^2 k \expn^{k (t-s)} ds\leq \left\|u\right\|_{L^2_t}^2 \left[-\expn^{k (t-s)} \right]_{s=0}^t=\left\|u\right\|_{L^2_t}^2\left(\expn^{k t} -1\right). \label{keineahnung2}
\end{align}
We insert (\ref{keineahnung2}) into (\ref{estimatebla}) and get
\begin{align*}
c_+(t) +\int_0^t c_+(s) k \expn^{k (t-s)} ds\leq 4\left\|u\right\|_{L^2_t}^2\expn^{k t}.
\end{align*}
Comparing this result with (\ref{firstbound}) gives 
 \begin{align}\label{secondbound}
\left\|y-y_r\right\|_{L^2_{t}} \leq 2\sigma_T \expn^{0.5 k t} \left\|u\right\|_{L^2_t},
\end{align}
which concludes this proof.
\end{proof}
\begin{remark}\label{remark1}
Let $S$ be the balancing transformation, then $\Sigma_T^{-1}=S Q_T^{-1} S^\top = S^{-\top} P_T^{-1} S^{-1}$. $\|G_T\Sigma_T^{-\frac{1}{2}}\|_2^2$ and $\|F^\top_T\Sigma_T^{-\frac{1}{2}}\|_2^2$ can be expressed 
with the help of the matrices corresponding to the original system, since \begin{align*}
G_T\Sigma_T^{-\frac{1}{2}} \left(G_T\Sigma_T^{-\frac{1}{2}}\right)^\top &=C_o \expn^{A_oT}S^{-1} (S Q_T^{-1} S^\top) S^{-\top} \expn^{A^\top_oT} C_o^\top =C_o \expn^{A_oT} Q_T^{-\frac{1}{2}} \left(C_o\expn^{A_oT} Q_T^{-\frac{1}{2}}\right)^\top,\\
F^\top_T\Sigma_T^{-\frac{1}{2}} \left(F^\top_T\Sigma_T^{-\frac{1}{2}}\right)^\top &=B^\top_o \expn^{A^\top_oT}S^{\top} (S^{-\top} P_T^{-1} S^{-1}) S \expn^{A_oT} B_o =B^\top_o \expn^{A^\top_oT} P_T^{-\frac{1}{2}} \left(B^\top_o \expn^{A^\top_oT} P_T^{-\frac{1}{2}}\right)^\top.
                                                                                                     \end{align*}
Hence, the constant in Lemma \ref{mainthm} is $c_T=\expn^{0.5 \max\{\|C_o \expn^{A_oT} Q_T^{-\frac{1}{2}}\|_2^2, \|B^\top_o \expn^{A^\top_oT} P_T^{-\frac{1}{2}}\|_2^2\}T}$.
\end{remark}
Lemma \ref{mainthm} is now used to prove the main result of this paper. The idea is to remove the time-limited singular values step by step and apply the above lemma several times.
\begin{theorem}\label{errorboundthm}
Let $\tilde \sigma_{T, 1}, \tilde \sigma_{T, 2}, \ldots, \tilde \sigma_{T, \kappa}$ be the distinct diagonal entries of $\Sigma_{T,2}$, i.e., 
$\Sigma_{T, 2}=\diag(\sigma_{T, r+1},\dots, \sigma_{T, n})=\diag(\tilde \sigma_{T, 1},\dots, \tilde\sigma_{T, \kappa})$.
Moreover, let $y$ and $y_r$ be the outputs of the full model (\ref{controlsystemoriginal}) and the reduced system (\ref{romstochstatebt}), respectively,  with zero initial conditions. Then, for $T>0$, 
it holds that \begin{align*}
 \left\|y-y_r\right\|_{L^2_{T}}\leq 2\left( \tilde \sigma_{T, 1} c_{T, 1} + \tilde \sigma_{T, 2} c_{T, 2}+ \ldots + \tilde \sigma_{T, \kappa} c_{T, \kappa} \right)\left\|u\right\|_{L^2_T},       
  \end{align*}
where $c_{T, i}=\expn^{0.5 \max\{\|G_T\Sigma_T^{-\frac{1}{2}}\smat I_{r_{i+1}}\\0 \srix\|_2^2, \|F^\top_T\Sigma_T^{-\frac{1}{2}}\smat I_{r_{i+1}}\\0 \srix\|_2^2\}T}$. Here, $I_{r_{i+1}}$ is the identity matrix of dimension 
$r_{i+1}$ that is computed through $r_{i+1}=r_{i}+m(\tilde\sigma_{T, i})$ for $i=1, 2 \ldots, \kappa-1$ setting $r_1=r$, where $m(\tilde\sigma_{i})$ is the multiplicity of $\tilde\sigma_{i}$. Further, we have 
$c_{T, \kappa}=\expn^{0.5 \max\{\|G_T\Sigma_T^{-\frac{1}{2}}\|_2^2, \|F^\top_T\Sigma_T^{-\frac{1}{2}}\|_2^2\}T}$.
\end{theorem}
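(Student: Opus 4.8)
The plan is to reduce the general statement to repeated applications of Lemma~\ref{mainthm} via a telescoping argument. The key observation is that Lemma~\ref{mainthm} handles the case where the truncated Gramian block $\Sigma_{T,2}$ is a single scalar multiple of the identity; the general case has several distinct values $\tilde\sigma_{T,1}>\tilde\sigma_{T,2}>\dots>\tilde\sigma_{T,\kappa}$ (grouped by multiplicity), so we peel them off one at a time, starting from the smallest block that must be removed and working outward, or — more naturally for the telescoping — by considering the chain of intermediate reduced models whose state dimensions are $n > r_\kappa + m(\tilde\sigma_{T,\kappa}) = n,\ \dots,\ r_2,\ r_1 = r$. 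Concretely, let $y^{(i)}$ denote the output of the balanced truncation of the original system down to dimension $r_i$ (so $y^{(0)} := y$ is the full model and $y^{(\kappa)} := y_r$ with $r_\kappa + m(\tilde\sigma_{T,\kappa})$ identified appropriately). By the triangle inequality in $L^2_T$,
\begin{align*}
\|y - y_r\|_{L^2_T} \leq \sum_{i=1}^{\kappa} \|y^{(i-1)} - y^{(i)}\|_{L^2_T}.
\end{align*}

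Each summand is the error of truncating exactly one repeated singular value $\tilde\sigma_{T,i}$ from an intermediate system. The heart of the argument is then to check that each of these single-step errors is itself governed by Lemma~\ref{mainthm} applied to the appropriate intermediate balanced realization. For the step from dimension (corresponding to keeping the block associated with $\tilde\sigma_{T,i+1}$) down to dimension $r_i$: the intermediate system is already balanced, its truncated Gramian block is exactly $\tilde\sigma_{T,i} I_{m(\tilde\sigma_{T,i})}$, so Lemma~\ref{mainthm} gives a bound $2\tilde\sigma_{T,i} c_{T,i}\|u\|_{L^2_T}$ provided we can identify the constant $c_{T,i}$ with the one appearing in the lemma when applied to that intermediate system. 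The point is that for the intermediate system the quantities $G_T, F_T, \Sigma_T$ restrict to the leading $r_{i+1}$ coordinates, which is precisely what $G_T\Sigma_T^{-1/2}\smat I_{r_{i+1}}\\0\srix$ and $F_T^\top\Sigma_T^{-1/2}\smat I_{r_{i+1}}\\0\srix$ encode — the leading principal submatrix of the relevant Gramian-type matrix, whose spectral norm bounds the corresponding quantity for the smaller system. (The last term uses the full matrices, matching Lemma~\ref{mainthm} exactly.)

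The main obstacle, and the step needing the most care, is verifying that truncating the original balanced system down to $r_i$ and then down to $r_{i+1}$ (equivalently, applying BT to the intermediate system) actually produces the same reduced model as truncating the original system directly — i.e., that time-limited balanced truncation is "nested" or transitive with respect to the partition of singular values. This is true because once the system is in balanced form $S P_T S^\top = S^{-\top}Q_T S^{-1} = \Sigma_T$, any leading principal submatrix selection commutes with further leading principal submatrix selection, and the intermediate truncated system $(A_{11}^{(i)}, B_1^{(i)}, C_1^{(i)})$ inherits balanced Gramians equal to the corresponding leading block $\diag(\tilde\sigma_{T,1}I,\dots,\tilde\sigma_{T,i}I)$ of $\Sigma_T$ — here one must observe that the leading $r_{i+1}\times r_{i+1}$ block of $\Sigma_T$ is the balanced time-limited Gramian pair of the intermediate system, which follows since the defining Lyapunov-type equations \gref{balancedreach}--\gref{balancedobserve} restrict consistently (the forcing terms $F_T F_T^\top$ and $G_T^\top G_T$ restrict to their leading blocks). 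Once this nesting is established, the estimate $\|G_T\Sigma_T^{-1/2}\smat I_{r_{i+1}}\\0\srix\|_2 \geq \|(\text{analogous quantity for the intermediate system})\|_2$ is a routine submatrix-norm inequality, the constants $c_{T,i}$ line up, and summing the $\kappa$ single-step bounds from Lemma~\ref{mainthm} yields the claimed estimate.
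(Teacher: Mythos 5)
Your proposal is correct and takes essentially the same route as the paper: a triangle-inequality telescoping over intermediate reduced models of dimensions $r_\kappa,\dots,r_2$, the observation that the upper-left blocks of (\ref{balancedreach})--(\ref{balancedobserve}) yield equations of the same form with the restricted matrices $(\tilde F_T,\tilde G_T,\tilde\Sigma_T)$ so that Lemma \ref{mainthm} applies at each step, and the identification of the constants (in fact $\tilde G_T\tilde\Sigma_T^{-\frac{1}{2}}$ equals the first $r_{i+1}$ columns of $G_T\Sigma_T^{-\frac{1}{2}}$, so the constants line up exactly rather than merely being bounded). One minor point of phrasing: the leading block of $\Sigma_T$ is not literally the time-limited Gramian pair of the intermediate model, but, as your parenthetical about the consistent restriction of the Lyapunov-type equations correctly indicates, only those restricted equations are needed for Lemma \ref{mainthm}, which is precisely how the paper argues.
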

\begin{proof}
We apply Lemma \ref{mainthm} several times in order to prove this result. We use the triangle inequality to find a bound between the error of $y$ and $y_r$:\begin{align*}
  \left\|y-y_r\right\|_{L^2_T}\leq \left\|y-y_{r_\kappa}\right\|_{L^2_T}+\left\|y_{r_\kappa}-y_{r_{\kappa-1}}\right\|_{L^2_T}+\ldots+\left\|y_{r_2}-y_{r}\right\|_{L^2_T},        
  \end{align*}
where $y_{r_{i}}$ is the output of the ROM with dimension $r_i$. In the first error term, only $\tilde\sigma_{T,\kappa}$ is removed from the system. 
Hence, we can apply Lemma \ref{mainthm} which gives\begin{align*}             
  \left\|y-y_{r_\kappa}\right\|_{L^2_T}\leq 2 \tilde \sigma_{T, \kappa} c_{T, \kappa}\left\|u\right\|_{L^2_T}.
\end{align*}
We can apply Lemma \ref{mainthm} again for the error between $y_{r_\kappa}$ and $y_{r_{\kappa-1}}$. This is because only $\tilde\sigma_{r_{\kappa-1}}$ is removed. 
Moreover, the matrix equations for the ROM with dimension $r_\kappa$ has the same form as (\ref{balancedreach}) and (\ref{balancedobserve}). 
To see this, the left upper blocks of (\ref{balancedreach}) and (\ref{balancedobserve}) need to be selected. This delivers the same kind of equations with respective submatrices of $A, B, C$ and, in particular 
$(F_T, G_T, \Sigma_T)$ are replaced by $(\tilde F_T, \tilde G_T, \tilde\Sigma_T):=(\smat I_{r_\kappa} & 0 \srix F_T, G_T\smat I_{r_\kappa}\\0 \srix, \smat I_{r_\kappa} & 0 \srix \Sigma_T\smat I_{r_\kappa}\\0 \srix)$. 
Due to Lemma \ref{mainthm}, it follows that
\begin{align*}
\left\|y_{r_\kappa}-y_{r_{\kappa-1}}\right\|_{L^2_T} &\leq 2 \tilde\sigma_{T, r_{\kappa-1}} \expn^{0.5 \max\{\|\tilde G_T\tilde \Sigma_T^{-\frac{1}{2}}\|_2^2, \|\tilde F^\top_T \tilde \Sigma_T^{-\frac{1}{2}}\|_2^2\}T}\left\|u\right\|_{L^2_T}
\\ &= 2 \tilde\sigma_{T, r_{\kappa-1}} c_{T, \kappa-1} \left\|u\right\|_{L^2_T},
\end{align*}
since $\tilde G_T\tilde \Sigma_T^{-\frac{1}{2}}=G_T\smat I_{r_\kappa}\\0 \srix \smat I_{r_\kappa} & 0 \srix\Sigma_T^{-\frac{1}{2}}\smat I_{r_\kappa}\\0 \srix=G_T \Sigma_T^{-\frac{1}{2}}\smat I_{r_\kappa} & 0\\0&0 \srix \smat I_{r_\kappa}\\0 \srix
=G_T \Sigma_T^{-\frac{1}{2}} \smat I_{r_\kappa}\\0 \srix$ and 
$\tilde F^\top_T\tilde \Sigma_T^{-\frac{1}{2}}=F^\top_T\smat I_{r_\kappa}\\0 \srix \smat I_{r_\kappa} & 0 \srix\Sigma_T^{-\frac{1}{2}}\smat I_{r_\kappa}\\0 \srix=F^\top_T \Sigma_T^{-\frac{1}{2}}\smat I_{r_\kappa} & 0\\0&0 \srix \smat I_{r_\kappa}\\0 \srix
=F^\top_T \Sigma_T^{-\frac{1}{2}} \smat I_{r_\kappa}\\0 \srix$. 
Repeatedly applying the above arguments to the other error terms, the claim follows. 
\end{proof}\\
For the case of unrestricted BT, it holds that $F_T=0$ and $G_T=0$ in (\ref{balancedreach}) and (\ref{balancedobserve}). This leads to $c_{T, i}=1$ for $i=1, 2 \ldots, \kappa$ in Theorem \ref{errorboundthm} 
which is the bound proved in \cite{ennsbound, morGlo84}. 
Consequently, the techniques in the proofs of Lemma \ref{mainthm} and Theorem \ref{errorboundthm}  can also be used for a rather short time domain proof for the bound in \cite{ennsbound, morGlo84}. 
It is even shorter for $F_T=0$ and $G_T=0$ since Gronwall's lemma doesn't have to be applied.\smallskip

Moreover, we observe that $\|G_T\|_2^2$ and $\|F^\top_T\|_2^2$ decay exponentially for $A$ being Hurwitz. Hence, $c_{T, i}\rightarrow 1$ for $T\rightarrow \infty$ and for all $i=1, 2 \ldots, \kappa$. Consequently, we see that for 
sufficiently large $T$ the error bound is mainly characterized by the truncated time-limited singular values. If they are small, the error is expected to be small. Therefore, it makes sense to choose the reduced order dimension 
$r$ based on the truncated singular values for sufficiently large terminal times $T$. However, the bound in Theorem \ref{errorboundthm} requires to know the balancing transformation $S$ which is practically not computed. 
Therefore, we provide an upper bound for the result in Theorem \ref{errorboundthm} in the next corollary.
\begin{corollary}\label{boundcoro}
Under the assumptions of Theorem \ref{errorboundthm} and for $T>0$ we have \begin{align*}             
  \left\|y-y_r\right\|_{L^2_T}\leq 2 c_T\left( \tilde \sigma_{T, 1} + \tilde \sigma_{T, 2} + \ldots + \tilde \sigma_{T, \kappa} \right)\left\|u\right\|_{L^2_T},
\end{align*}
where $c_T=\expn^{0.5 \max\{\|C_o \expn^{A_oT} Q_T^{-\frac{1}{2}}\|_2^2, \|B^\top_o \expn^{A^\top_oT} P_T^{-\frac{1}{2}}\|_2^2\}T}$.
\end{corollary}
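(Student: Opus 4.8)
\textbf{Proof proposal for Corollary~\ref{boundcoro}.}

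The plan is to start from the bound in Theorem~\ref{errorboundthm} and simply replace each of the constants $c_{T,i}$ by the single, larger constant $c_T$ appearing in the statement. Once that replacement is justified, the sum $2(\tilde\sigma_{T,1}c_{T,1}+\dots+\tilde\sigma_{T,\kappa}c_{T,\kappa})$ is dominated by $2c_T(\tilde\sigma_{T,1}+\dots+\tilde\sigma_{T,\kappa})$, and the claimed inequality follows immediately by the transitivity of $\le$ (the time-limited singular values $\tilde\sigma_{T,i}$ and the norm $\|u\|_{L^2_T}$ being nonnegative). So the entire content of the corollary is the inequality $c_{T,i}\le c_T$ for every $i=1,\dots,\kappa$.

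To establish $c_{T,i}\le c_T$, recall from Theorem~\ref{errorboundthm} that
\[
c_{T,i}=\expn^{0.5\max\{\|G_T\Sigma_T^{-\frac12}\smat I_{r_{i+1}}\\0\srix\|_2^2,\ \|F_T^\top\Sigma_T^{-\frac12}\smat I_{r_{i+1}}\\0\srix\|_2^2\}T}
\]
(with the convention that for $i=\kappa$ the selection matrix is the full identity). Since $\expn^{0.5(\cdot)T}$ is monotonically increasing in its argument for $T>0$, it suffices to show that
\[
\Bigl\|G_T\Sigma_T^{-\frac12}\smat I_{r_{i+1}}\\0\srix\Bigr\|_2 \le \bigl\|G_T\Sigma_T^{-\frac12}\bigr\|_2
\quad\text{and}\quad
\Bigl\|F_T^\top\Sigma_T^{-\frac12}\smat I_{r_{i+1}}\\0\srix\Bigr\|_2 \le \bigl\|F_T^\top\Sigma_T^{-\frac12}\bigr\|_2 .
\]
This is the standard fact that right-multiplying by a matrix with orthonormal columns (here $\smat I_{r_{i+1}}\\0\srix$, which selects a subset of columns) does not increase the spectral norm: $\|MV\|_2\le\|M\|_2\|V\|_2=\|M\|_2$ since $\|V\|_2=1$. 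Hence $c_{T,i}\le \expn^{0.5\max\{\|G_T\Sigma_T^{-\frac12}\|_2^2,\ \|F_T^\top\Sigma_T^{-\frac12}\|_2^2\}T}$, and the right-hand side equals $c_T$ by the identities derived in Remark~\ref{remark1}, namely $\|G_T\Sigma_T^{-\frac12}\|_2=\|C_o\expn^{A_oT}Q_T^{-\frac12}\|_2$ and $\|F_T^\top\Sigma_T^{-\frac12}\|_2=\|B_o^\top\expn^{A_o^\top T}P_T^{-\frac12}\|_2$.

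Assembling the pieces: by Theorem~\ref{errorboundthm},
\[
\|y-y_r\|_{L^2_T}\le 2\sum_{i=1}^{\kappa}\tilde\sigma_{T,i}\,c_{T,i}\,\|u\|_{L^2_T}
\le 2\Bigl(\sum_{i=1}^{\kappa}\tilde\sigma_{T,i}\Bigr)c_T\,\|u\|_{L^2_T},
\]
which is exactly the asserted bound. There is essentially no obstacle here — the only point requiring a word of care is making the selection-matrix norm argument explicit and invoking Remark~\ref{remark1} to rewrite $c_T$ in terms of the original system data, so that the bound, unlike the one in Theorem~\ref{errorboundthm}, no longer references the (uncomputed) balancing transformation $S$.
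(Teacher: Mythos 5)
Your proposal is correct and follows essentially the same route as the paper: bound each $c_{T,i}$ by $c_T$ via the observation that right-multiplication by the column-selection matrix cannot increase the spectral norm, then invoke Remark \ref{remark1} to express $c_{T,\kappa}=c_T$ in terms of the original system data. You merely spell out the norm inequality in more detail than the paper does.
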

\begin{proof}
For $i=1, 2, \ldots, \kappa-1$ we have \begin{align*}
     \max\{\|G_T\Sigma_T^{-\frac{1}{2}}\smat I_{r_{i+1}}\\0 \srix\|_2^2, \|F^\top_T\Sigma_T^{-\frac{1}{2}}\smat I_{r_{i+1}}\\0 \srix\|_2^2\}\leq \max\{\|G_T\Sigma_T^{-\frac{1}{2}}\|_2^2, \|F^\top_T\Sigma_T^{-\frac{1}{2}}\|_2^2\}                               
                                       \end{align*}
and hence $c_{T, i}\leq c_{T, \kappa}=c_T$ using Remark \ref{remark1}.
\end{proof}\\
The constant $c_T$ can be determined practically. We will computed the above bound for an example in the next section.

\section{Numerical experiments}

We test the derived $L^2_T$-error bound stated in Corollary \ref{boundcoro} with an example that is taken from \\ \url{http://slicot.org/20-site/126-benchmark-examples-for-model-reduction}. The particular example is a heat equation in a thin rod, where the corresponding 
data can be found in the file ``heat-cont.mat''. Here, the state space dimension is $n=200$ and a system with a single input and a single output is considered. We apply time-limited BT to this example. 
Furthermore, we fix the final time to $T=12$ and choose two different normalized controls. These are $u_1= \tilde u_1/\left\|\tilde u_1\right\|_{L^2_T}$ and $u_1= \tilde u_2/\left\|\tilde u_2\right\|_{L^2_T}$, where 
$\tilde u_1(t)=\sin(2/5\, \pi t)$ and $\tilde u_2(t)=\cos(2 \pi t)\expn^{-t}$. The outputs of the original model (\ref{controlsystemoriginal}) and the ROM (\ref{romstochstatebt}) corresponding 
to $u_i$ are denoted by $y^i$ and $y^i_r$, respectively ($i=1,2$).

\begin{table}[ht]
\centering
\begin{tabular}{|c|c||c|c|}
\hline 
$r$&$\left\|y^1-y^1_r\right\|_{L^2_{T\textcolor{white}{\sum}}}$&$\left\|y^2-y_r^2\right\|_{L^2_T}$& $2c_T\sum_{i=r+1}^{200\textcolor{white}{\Psi}}\sigma_i$\\
\hline
\hline
$2$ & $2.91$e$-04$&$1.62$e$-04$& $4.68$e$-03$\\
$4$&$1.88$e$-05$&$1.90$e$-05$& $2.55$e$-04$\\
$6$&$2.07$e$-07$&$3.26$e$-07$& $4.13$e$-06$\\
$8$& $1.67$e$-08$ &$1.93$e$-08$&$2.56$e$-07$\\
\hline
\end{tabular}
\caption{${L^2_T}$-error time-limited BT and error bounds for different reduced order dimensions $r$;  $u=u_1, u_2$ and $T=12$.}\label{tablel2error}
\end{table}
One can see from Table \ref{tablel2error} that the error bound is relatively tight for the heat equation example and choice of $T$.

\section{Conclusions}
In this paper, we described the procedure of time-limited balanced truncation, a balancing related model order technique that is applied to find a good reduced system on a finite time interval $[0, T]$.
$\mathcal H_2$-type error bounds for this scheme have already been studied. However, 
no bound for the output error in $L^2_T$ existed so far. We closed this gap in this paper which is the main contribution here. The obtained $L^2_T$-error bound showed that the reduced order dimension can be found based on 
the truncated time-limited singular values for a sufficiently large terminal time $T$. Moreover, the bound converges to the $\mathcal H_\infty$-error bound of unrestricted (classical) balanced truncation such that 
the main result of this paper can be seen as an extension of this $\mathcal H_\infty$-error bound.

%\section*{Acknowledgements}
%The author would like to thank the reviewers for the effort they put into carefully reading the manuscript and for providing valuable comments which 
%led to new insights into the problem and which helped a lot to improve the quality of the paper.

\appendix

\section{Gronwall lemma}

In this appendix, we state a version of Gronwall's lemma that we used throughout this paper.
\begin{lemma}[Gronwall lemma]\label{gronwall}
Let $T>0$, $z, \alpha: [0, T]\rightarrow \mathbb R$ be continuous functions and $\beta: [0, T]\rightarrow \mathbb R$ be a nonnegative continuous function. 
If \begin{align*}
    z(t)\leq \alpha(t)+\int_0^t \beta(s) z(s) ds,
   \end{align*}
then for all $t\in[0, T]$, it holds that \begin{align}\label{gronwallineq}
    z(t)\leq \alpha(t)+\int_0^t \alpha(s)\beta(s) \exp\left(\int_s^t \beta(w)dw\right) ds.
   \end{align}
\end{lemma}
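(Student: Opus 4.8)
The plan is to reduce the integral inequality to a linear differential inequality for the accumulated integral term and then solve it with an integrating factor, which is the classical route to Gronwall's inequality. First I would introduce the auxiliary function $R(t) := \int_0^t \beta(s) z(s)\, ds$. Since $z$ and $\beta$ are continuous, the product $\beta z$ is continuous, so $R$ is continuously differentiable on $[0,T]$ with $R(0) = 0$ and $R'(t) = \beta(t) z(t)$. The hypothesis $z(t) \le \alpha(t) + R(t)$ together with $\beta(t) \ge 0$ then yields $R'(t) \le \beta(t)\alpha(t) + \beta(t) R(t)$, i.e. $R'(t) - \beta(t) R(t) \le \beta(t)\alpha(t)$.

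The next step is to multiply by the integrating factor $\mu(t) := \exp\left(-\int_0^t \beta(w)\, dw\right)$, which is positive and $C^1$. Then $\frac{d}{dt}\big(\mu(t) R(t)\big) = \mu(t)\big(R'(t) - \beta(t) R(t)\big) \le \mu(t)\beta(t)\alpha(t)$. Integrating over $[0,t]$ and using $\mu(0) R(0) = 0$ gives $\mu(t) R(t) \le \int_0^t \mu(s)\beta(s)\alpha(s)\, ds$. Dividing by $\mu(t)>0$ and noting that $\mu(s)/\mu(t) = \exp\left(\int_s^t \beta(w)\, dw\right)$ produces $R(t) \le \int_0^t \alpha(s)\beta(s)\exp\left(\int_s^t \beta(w)\, dw\right)\, ds$. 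Substituting this back into $z(t) \le \alpha(t) + R(t)$ delivers the claimed bound for every $t\in[0,T]$, which closes the argument.

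There is no genuine obstacle here; this is a textbook computation and the shortest of the proofs in the paper. The only points deserving a line of care are: checking that $R$ is actually differentiable, which is exactly why continuity of $z$ and $\beta$ is assumed; noting that the sign condition $\beta \ge 0$ is what allows the passage from $z \le \alpha + R$ to the inequality for $R'$; and observing that $\alpha$ is \emph{not} assumed monotone, so the sharper-looking estimate $z(t) \le \alpha(t)\exp\left(\int_0^t \beta\right)$ is unavailable and the stated integral form is the correct general version — which is precisely the form invoked in the proofs of Lemma \ref{mainthm} and Theorem \ref{errorboundthm}.
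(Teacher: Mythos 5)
Your proof is correct and complete: the integrating-factor argument via $R(t)=\int_0^t\beta(s)z(s)\,ds$ is the classical route and every step (differentiability of $R$, use of $\beta\ge 0$, division by the positive factor $\mu(t)$) is justified. The paper itself does not reproduce an argument but merely cites \cite[Proposition 2.1]{gronwalllemma}, and that reference's proof is essentially this same standard computation, so your write-up simply makes explicit what the paper delegates to the literature.
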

   \begin{proof}
  The result is shown as in \cite[Proposition 2.1]{gronwalllemma}.
 \end{proof}

\bibliographystyle{plain}
%\bibliography{lvz}

\end{document}